\newtheorem{thm}{Theorem}[section]
\newtheorem{lemma}[thm]{Lemma}
\theoremstyle{definition}
\newtheorem{dfn}[thm]{Definition}
\theoremstyle{remark}
\numberwithin{equation}{section}
\newcommand{\Ric}{\ensuremath{\mathrm{Ric}}}
\begin{document}

\title{Complete Shrinking Ricci Solitons have Finite Fundamental Group}

\author{William Wylie}
\address{Department of Mathematics, University of California, Los Angeles, CA 90095 }

\email{wylie@math.ucla.edu}

\subjclass{53C20}
\date{April 2, 2007}

\keywords{Ricci Soliton, noncompact manifold, fundamental group}

\begin{abstract}
We show that if a complete Riemannian manifold supports a vector field such that the Ricci tensor plus the  Lie derivative of the metric with respect to the vector field  has a positive lower bound, then the fundamental group is finite.  In particular,  it follows that  complete shrinking Ricci solitons and complete smooth metric measure spaces with a positive lower bound on the  Bakry-Emery tensor  have finite fundamental group.  The method of proof is to generalize arguments of Garcia-Rio and  Fernandez-Lopez in the compact case.
\end{abstract}

\maketitle

\section{Introduction}
In this paper we are interested in studying complete Riemannian manifolds $(M,g)$ with a vector field $X$  such that, for some $\lambda > 0$,  
\begin{equation}
\label{MainThing} \Ric_g  + \mathcal{L}_Xg \geq \lambda g
\end{equation}
where $\mathcal{L}_Xg$ is the Lie derivative of $g$ with respect to the vector field $X$.  

This class  of manifolds includes two well known subclasses.  The first is the class of  complete manifolds such that $ \Ric_g  + \mathcal{L}_Xg = \lambda g$.  This is the class of complete Ricci Solitons.   The study of  Ricci solitons  is important in understanding many aspects of Ricci flow \cite{HRF}.  The second subclass  is the class of smooth metric measure spaces with Bakry-Emery tensor bounded below by $\lambda$. This class consists of smooth manifolds that satisfy (\ref{MainThing}) for some gradient vector field $X = \nabla f$.  We refer the reader to Lott \cite{Lott2003} and Qian \cite{Qian1997}  for  topological results concerning the Bakry-Emery tensor. 

 Fern\'{a}ndez-L\'{o}pez and Garc\'{i}a-R\'{i}o   \cite{FLGR} have  proven the following Myers type theorem: If a complete manifold $(M,g)$ satisfies (\ref{MainThing}) and the vector field $X$ has bounded norm,  then M is compact.  It is an immediate corollary that any compact manifold satisfying (\ref{MainThing}) has finite fundamental group.  In the case where $X$ is a gradient vector field this was proven earlier by Lott \cite{Lott2003}.  
 
The assumption of bounded $||X||$ is necessary to show $M$ is compact since, for example, Euclidean space with the vector field $X(v) = v$, $\forall v \in \mathbb{R}^n$ satisfies (\ref{MainThing}).  However, we show that  the fundamental group must be finite in the noncompact case.

\begin{thm} \label{Theorem} If $M$ is a complete Riemannian manifold satisfying (\ref{MainThing})  then $M$ has finite fundamental group. \end{thm}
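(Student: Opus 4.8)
The plan is to pass to the universal cover and reduce the statement to a finiteness statement for the deck group, then drive everything by a one--dimensional comparison coming from \eqref{MainThing}. Let $\pi:(\tilde M,\tilde g)\to(M,g)$ be the universal Riemannian cover and lift $X$ to a vector field $\tilde X$ on $\tilde M$. Since curvature and Lie derivatives are local, $\tilde g$ and $\tilde X$ again satisfy \eqref{MainThing} with the same $\lambda$, and the deck group $\Gamma\cong\pi_1(M)$ acts freely and properly discontinuously by isometries preserving $\tilde X$. It therefore suffices to prove that $\Gamma$ is finite. The single computation behind everything is this: along any unit-speed geodesic $\gamma$ in $\tilde M$, writing $h(t)=\tilde g(\tilde X,\gamma')$, one has $\tfrac12(\mathcal L_{\tilde X}\tilde g)(\gamma',\gamma')=\tilde g(\nabla_{\gamma'}\tilde X,\gamma')=h'(t)$, so \eqref{MainThing} becomes the scalar inequality $\Ric(\gamma',\gamma')\ge\lambda-2h'(t)$.

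First I would feed this into the Riccati inequality $m'\le-\tfrac{m^2}{n-1}-\Ric(\gamma',\gamma')$ for $m(t)=\tilde\Delta r$ along a minimizing geodesic from a fixed point $p$. Combining the two, the $h'$ terms cancel and one is left with $\tfrac{d}{dt}\bigl(m-2h\bigr)\le-\tfrac{m^2}{n-1}-\lambda\le-\lambda$. In the gradient case $X=\nabla f$ this says precisely that the weighted mean curvature for the measure $e^{-2f}\,dvol$ decays at least linearly, since then $h=f'$ and $m-2h=\tfrac{d}{dt}\log\bigl(A(t)e^{-2f}\bigr)$ with $A$ the unweighted area element. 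Integrating twice yields Gaussian decay $A(t)e^{-2f(\gamma(t))}\le Ce^{-\lambda t^2/2}$, hence the total weighted volume $\int_{\tilde M}e^{-2f}\,dvol$ is finite. Because $\tilde X=\nabla f$ is $\Gamma$-invariant and $\tilde M$ is simply connected, $f$ may be normalized to be $\Gamma$-invariant, so $\Gamma$ preserves this finite measure; acting freely and properly discontinuously, it has a fundamental domain of positive weighted volume $v$, whence $|\Gamma|\le\bigl(\int_{\tilde M}e^{-2f}\,dvol\bigr)/v<\infty$. This settles the Bakry-Emery (gradient) case and recovers Lott's theorem quantitatively.

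The main obstacle is the non-gradient case, that is, controlling the drift $h$ when $X$ is an arbitrary vector field. The difficulty is already visible above: to convert $\tfrac{d}{dt}(m-2h)\le-\lambda$ into decay of the area element one must integrate $h$ along the geodesic, and while for a genuine gradient $\int_0^t h=f(\gamma(t))-f(p)$ is a function of the endpoint, for general $X$ the ``work'' $\int_0^t\tilde g(\tilde X,\gamma')$ is path-dependent and a priori unbounded. The same term blocks a naive second-variation argument: testing the index form of a minimizing geodesic against $\phi E_i$ with $\phi(0)=\phi(L)=0$ and $E_i$ parallel normal fields produces, after integrating $2\int\phi^2h'$ by parts, the term $-4\int_0^L\phi\phi'\,h\,dt$, which is not sign-definite and cannot be absorbed without a bound on $\|\tilde X\|$; this is exactly why the Fern\'andez-L\'opez--Garc\'ia-R\'io Myers theorem required bounded $\|X\|$. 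I therefore expect the crux to be a purely one-dimensional comparison that tames $h$ using only the radial inequality $\Ric(\gamma',\gamma')\ge\lambda-2h'$ together with the $\Gamma$-equivariance of $\tilde X$: along geodesics joining a point to its $\Gamma$-translates the boundary values of $h$ are intertwined by the isometries $d\sigma$, and arranging these so that the offending boundary and drift contributions cancel is, I anticipate, the heart of the matter.
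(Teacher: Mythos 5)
Your reduction to the deck group $\Gamma$ and the identity $\tfrac12(\mathcal{L}_{\tilde X}\tilde g)(\gamma',\gamma') = \tfrac{d}{ds}\,\tilde g(\tilde X,\gamma')$ are exactly the paper's starting points, and your weighted-volume argument does essentially prove the gradient case: Gaussian decay of the weighted area element via the Riccati comparison gives $\int_{\tilde M}e^{-2f}\,dvol<\infty$, and invariance of the measure bounds $|\Gamma|$ by total mass over the mass of a fundamental domain. That route is genuinely different from the paper's and has real content, though one step needs repair: $\Gamma$-invariance of $\nabla f$ only gives $f\circ\sigma = f + c(\sigma)$ for a homomorphism $c:\Gamma\to\mathbb{R}$, and no additive normalization of $f$ can kill a nonzero $c$; what actually saves you is that each $\sigma$ is an isometry, so $\int e^{-2f} = \int e^{-2f\circ\sigma} = e^{-2c(\sigma)}\int e^{-2f}$, and finiteness of the integral forces $c\equiv 0$. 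The genuine gap is elsewhere: Theorem \ref{Theorem} is stated for an arbitrary vector field $X$, and you leave that case open. Your final paragraph correctly identifies the difficulty (path-dependence of $\int\tilde g(\tilde X,\gamma')$, and the non-sign-definite term $-4\int\phi\phi'\,h$ in a cutoff second-variation test) but only anticipates that some cancellation scheme should resolve it, without producing one. As written, the proposal proves Lott's gradient theorem, not the statement in question.

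The missing idea is that no cancellation is needed, because one should never multiply the drift term by the cutoff in the first place; that is precisely where your problematic term comes from. The paper decouples the two estimates on $I = \int_0^r \Ric(\gamma',\gamma')\,ds$ along the minimizing geodesic from $\tilde p$ to $h(\tilde p)$. For the upper bound, Hamilton's second-variation estimate (Lemma \ref{Variation}), with $\phi$ equal to $1$ except on unit intervals at the two ends, gives $I \leq 2(n-1) + H_{\tilde p} + H_{h(\tilde p)}$, where $H_x$ is the supremum of the Ricci curvature on $B(x,1)$ --- a purely local quantity, finite by continuity, requiring no global curvature hypothesis; the Ricci tensor appears here alone, with no drift term to absorb. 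For the lower bound, integrate (\ref{MainThing}) directly, with no cutoff at all: since $\mathcal{L}_Xg(\gamma',\gamma') = 2\tfrac{d}{ds}g(X,\gamma')$ is exact, it contributes only endpoint terms, giving $I \geq \lambda r - 2||\tilde X_{\tilde p}|| - 2||\tilde X_{h(\tilde p)}||$. Combining, $d(\tilde p, h(\tilde p)) \leq \max\left\{1, \tfrac{1}{\lambda}\left(2(n-1) + H_{\tilde p} + H_{h(\tilde p)} + 2||\tilde X_{\tilde p}|| + 2||\tilde X_{h(\tilde p)}||\right)\right\}$, and since $h$ is an isometry carrying $\tilde X$ to itself, the quantities at $h(\tilde p)$ equal those at $\tilde p$, so the right-hand side is independent of $h$. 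A bounded orbit of a free, properly discontinuous action forces $\Gamma$ to be finite. This also explains why the bounded-$||X||$ hypothesis of \cite{FLGR} is needed only for compactness, where one must bound $d(p,q)$ over all pairs: here the only relevant values of $||X||$ and of the local Ricci supremum are those at $\tilde p$ and its translates, which equivariance makes equal.
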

 
Naber \cite{Naber2006} has  indepedently proven  Theorem  \ref{Theorem} under the additional assumption that $X = \nabla f$ and the Ricci curvature is bounded. 
 
There are complete manifolds with positive Ricci curvature and infinite fundamental group.  It is an obvious  consequence of Theorem \ref{Theorem} that the Ricci tensor of these manifolds can not be perturbed by a Lie derivative term to be bounded away from zero. 

The  proof of Theorem \ref{Theorem} is similar to the arguments of  Fern\'{a}ndez-L\'{o}pez and Garc\'{i}a-R\'{i}o \cite{FLGR}.  They show that if $M$ satisfies (\ref{MainThing}) and $|| X ||$ is bounded then the integral of the Ricci curvature along every geodesic is infinite.   By the Ambrose theorem \cite{Ambrose1957} this implies that the manifold  is compact.  The main idea of this paper is to replace the Ambrose theorem with an estimate of Hamilton  \cite{Hamilton1993} (See Lemma \ref{Variation} below).  This estimate  allows us to obtain an upper  bound on  the distance between two points  that  depends  only on the value of $||X||$ at each point and an upper bound on the Ricci curvature in a neighborhood of each point (Theorem \ref{Isom}).  Applying the upper bound  to a point in the universal cover of $M$ and its image under a deck transformation yields Theorem \ref{Theorem}.  

\section{Proof of Theorem \ref{Theorem}}
We make the following definition to aid the exposition. 
\begin{dfn} For any point $p \in M$ define \[H_p =  \max \left\{ 0, \sup \left \{ \Ric_y(v,v) : y \in B(p,1), ||v||=1\right\} \right \}.\] \end{dfn}
We can now state the main lemma.
\begin{lemma} \label{Variation}  Let $(M,g)$ be a complete Riemannian manifold, let $p,q \in M$ such that $r= d(p,q) > 1$ and let $\gamma$ be the minimal geodesic from $p$ to $q$ parametrized by arclength, then 
\[ \int_0^{r} \Ric (\gamma'(s), \gamma'(s))ds  \leq 2(n-1)+H_p + H_q. \]
 \end{lemma}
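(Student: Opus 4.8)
The plan is to exploit the fact that $\gamma$, being a minimal geodesic, carries a nonnegative index form, and to feed into that index form a test function localized so that the only curvature it fails to integrate with weight one is confined to the unit balls around $p$ and $q$. First I would choose parallel orthonormal vector fields $e_1, \dots, e_{n-1}$ along $\gamma$, each orthogonal to $\gamma'$, and for a fixed piecewise-smooth function $\phi$ with $\phi(0) = \phi(r) = 0$ form the variation fields $V_i = \phi\, e_i$. Since $\gamma$ is minimizing, the second variation of arclength is nonnegative, so the index form satisfies $I(V_i, V_i) = \int_0^r \left( (\phi')^2 - \phi^2 \langle R(e_i, \gamma')\gamma', e_i \rangle \right) ds \geq 0$, where the first term uses that $e_i$ is parallel. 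Summing over $i$ and using $\sum_i \langle R(e_i,\gamma')\gamma', e_i\rangle = \Ric(\gamma', \gamma')$ collapses the curvature terms into the Ricci curvature, yielding
\[ \int_0^r \phi^2 \, \Ric(\gamma'(s), \gamma'(s))\, ds \;\leq\; (n-1)\int_0^r (\phi')^2\, ds. \]

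The decisive choice is the test function. Rather than the global sine used in the classical Myers argument, I would take $\phi$ to equal $1$ on the middle segment $[1, r-1]$ and to ramp down linearly to $0$ on each of the unit end intervals $[0,1]$ and $[r-1,r]$; this is exactly where the hypothesis $r > 1$ is needed. For this $\phi$ one has $\phi' = \pm 1$ on the two end intervals and $\phi' = 0$ in between, so $\int_0^r (\phi')^2\, ds = 2$, and the right-hand side above becomes precisely $2(n-1)$.

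It then remains to replace $\phi^2$ by $1$ inside the integral while controlling the error. Writing $\int_0^r \Ric\, ds = \int_0^r \phi^2 \Ric\, ds + \int_0^r (1-\phi^2)\Ric\, ds$, I would note that $1 - \phi^2$ vanishes on $[1,r-1]$ and is nonnegative everywhere, so the error term is supported on the two unit collars. On $[0,1]$ the point $\gamma(s)$ lies in $B(p,1)$, hence $\Ric(\gamma'(s),\gamma'(s)) \leq H_p$ by the definition of $H_p$; since $0 \leq 1-\phi^2 \leq 1$ and $H_p \geq 0$, the integrand $(1-\phi^2)\Ric$ is bounded pointwise by $H_p$, so its integral over $[0,1]$ is at most $H_p$. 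The symmetric estimate near $q$ contributes at most $H_q$. Combining the three bounds gives the claimed inequality $\int_0^r \Ric\, ds \leq 2(n-1) + H_p + H_q$.

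I expect the point requiring the most care to be the bookkeeping of the second-variation formula: confirming the correct sign convention so that minimality yields $I(V_i,V_i) \geq 0$, verifying that the curvature terms assemble into $\Ric(\gamma',\gamma')$, and checking that the only piecewise-smooth (Lipschitz, endpoint-fixed) $\phi$ is still an admissible variation field, which it is. The conceptual heart, however, is the localization: by spending the entire cost $\int (\phi')^2$ on the two unit collars, the interior Ricci curvature is integrated with weight one while only the near-endpoint curvature is discounted, and that discount is exactly what the quantities $H_p$ and $H_q$ are defined to absorb.
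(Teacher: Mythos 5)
Your proposal is correct and follows essentially the same argument as the paper: the same trapezoidal test function in the second variation (index form) inequality, the same decomposition $\int \Ric = \int \phi^2\Ric + \int(1-\phi^2)\Ric$, and the same pointwise bounds by $H_p$ and $H_q$ on the unit collars. The only difference is that you spell out the derivation of the index form inequality from parallel orthonormal fields, which the paper simply cites as the second variation formula.
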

Lemma \ref{Variation}  was used by  Hamilton \cite{Hamilton1993} to study  the change in the distance function on a Riemannian manifold evolving by Ricci flow and also appears in Perelman (\cite{Per1}, Lemma 8.1).  We include the proof for completeness.
\begin{proof}
 By the second variation of arclength formula, for any piecewise smooth function $\phi$ with $\phi(0) = \phi(r) = 0$,
\begin{equation} \label{phi} 0 \leq \int_0^r \left((n-1)(\phi '(s))^2 - \phi^2(s)\Ric(\gamma'(s), \gamma'(s)) \right)ds. \end{equation}
Let $\phi$ be the function
\[\phi(s) =  \left\{
\begin{array}{ll}
s & \mbox{$0\leq s \leq 1$} \\
1 & \mbox{$1 \leq s \leq r-1$}\\
r-s& \mbox{ $r-1 \leq s \leq r$}
\end{array}
\right. \]
Then, since $\phi(s) = 1$ and $\phi '(s) = 0$ for $1 \leq s \leq r-1$, (\ref{phi}) becomes  
\begin{eqnarray*}
0 &\leq& \int_{0}^1 \left((n-1) (\phi'(s))^2\right) ds + \int_{r-1}^r \left((n-1) (\phi'(s))^2\right) ds -  \int_{0}^1\left(\phi^2(s)\Ric(\gamma', \gamma')\right)ds \\ &&- \int_{r-1}^r \left(\phi^2(s)\Ric(\gamma'(s), \gamma'(s))\right) ds - \int_{1}^{r-1}\Ric(\gamma'(s), \gamma'(s))ds.
\end{eqnarray*}
Adding $\int_0^r \Ric(\gamma'(s), \gamma'(s)) ds$ to both sides of the equation yields 
\begin{eqnarray}
\int_0^r  \Ric(\gamma'(s), \gamma'(s)) ds &\leq& \int_{0}^1\left( (n-1) (\phi'(s))^2 \right)ds + \int_{r-1}^r \left((n-1)(\phi'(s))^2\right) ds \nonumber \\ \label{Int}&&+  \int_{0}^1\left(\left(1-\phi^2(s)\right)\Ric(\gamma'(s), \gamma'(s))\right)ds \\ &&+ \int_{r-1}^r \left(\left(1-\phi^2(s)\right)\Ric(\gamma'(s), \gamma'(s)) \right)ds. 
\nonumber\end{eqnarray}
We now work out the terms on the right hand side of equation (\ref{Int}).  Since $|\phi'(s)| =1$ for $0 \leq s \leq 1$ and $r-1 \leq s \leq r$ we have
\begin{equation} \label{Term1} \int_{0}^1\left( (n-1) (\phi'(s))^2 \right)ds + \int_{r-1}^r \left((n-1)(\phi'(s))^2\right) = 2(n-1). \end{equation}
Moreover,  $0 \leq \phi \leq 1$ and $\Ric(\gamma'(s), \gamma'(s)) \leq H_p$ for $0\leq s\leq 1$, therefore
\begin{equation} \label{Term2}\int_{0}^1\left(\left(1-\phi^2(s)\right)\Ric(\gamma'(s), \gamma'(s))\right)ds. \leq H_p. \end{equation}
Similarly, since  $\Ric(\gamma'(s), \gamma'(s)) \leq H_q$ for $r-1 \leq s \leq r$, 
\begin{equation} \label{Term3}\int_{r-1}^r \left(\left(1-\phi^2(s)\right)\Ric(\gamma'(s), \gamma'(s)) \right)ds. \leq H_q. \end{equation}
Thus, combining (\ref{Int}), (\ref{Term1}), (\ref{Term2}), and (\ref{Term3}) gives the lemma. 

\end{proof}

Using  Lemma \ref{Variation} and the arguments in \cite{FLGR},  we can now derive an upper bound on the distance between two points that depends only on $||X||$ and $H$.  

\begin{thm} \label{Isom}If $(M,g)$ is  a complete manifold satisfying (\ref{MainThing}) then, for any $p,q \in M$,
\begin{equation}  \label{IsomEq} d(p, q) \leq \max \left \{ 1,  \frac{1}{\lambda} \big ( 2(n-1) + H_p+ H_q + 2||X_p|| + 2||X_q|| \big ) \right\}.  \end{equation}
\end{thm}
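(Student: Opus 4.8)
The plan is to integrate the defining inequality (\ref{MainThing}) along the minimal geodesic $\gamma$ from $p$ to $q$ and compare against the bound from Lemma \ref{Variation}. Assuming $r = d(p,q) > 1$ (otherwise the trivial estimate $d(p,q) \leq 1$ already gives the claimed maximum), I would contract (\ref{MainThing}) with the unit tangent $\gamma'(s)$ to obtain
\begin{equation*}
\lambda \leq \Ric(\gamma'(s),\gamma'(s)) + (\mathcal{L}_X g)(\gamma'(s),\gamma'(s)).
\end{equation*}
Integrating from $0$ to $r$ yields
\begin{equation*}
\lambda r \leq \int_0^r \Ric(\gamma'(s),\gamma'(s))\,ds + \int_0^r (\mathcal{L}_X g)(\gamma'(s),\gamma'(s))\,ds.
\end{equation*}
The first integral is bounded above by $2(n-1) + H_p + H_q$ by Lemma \ref{Variation}, so the whole argument reduces to controlling the Lie derivative integral.

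The key step is to recognize that the Lie derivative term integrates to a boundary term. Along a geodesic one has the pointwise identity $(\mathcal{L}_X g)(\gamma',\gamma') = 2\langle \nabla_{\gamma'} X, \gamma'\rangle$, and since $\gamma$ is a geodesic ($\nabla_{\gamma'}\gamma' = 0$) this equals $2\frac{d}{ds}\langle X, \gamma'\rangle$. Therefore
\begin{equation*}
\int_0^r (\mathcal{L}_X g)(\gamma'(s),\gamma'(s))\,ds = 2\langle X, \gamma'\rangle\Big|_0^r = 2\langle X_q, \gamma'(r)\rangle - 2\langle X_p, \gamma'(0)\rangle.
\end{equation*}
By Cauchy--Schwarz and $\|\gamma'\| = 1$ this is at most $2\|X_q\| + 2\|X_p\|$. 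This is the heart of the matter: it is exactly the observation that lets one dispense with the hypothesis that $\|X\|$ is bounded, since we only ever evaluate $X$ at the two endpoints $p$ and $q$ rather than along the whole geodesic.

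Combining the two estimates gives
\begin{equation*}
\lambda r \leq 2(n-1) + H_p + H_q + 2\|X_p\| + 2\|X_q\|,
\end{equation*}
and dividing by $\lambda > 0$ produces the bound on $d(p,q) = r$ in the case $r > 1$. Taking the maximum with $1$ covers the remaining case $r \leq 1$ and yields (\ref{IsomEq}).

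I expect the only genuinely delicate point to be verifying the pointwise identity $(\mathcal{L}_X g)(\gamma',\gamma') = 2\frac{d}{ds}\langle X, \gamma'\rangle$ along a geodesic, and in particular being careful that $X$ need not be a gradient field for this computation to go through. This follows from the general formula $(\mathcal{L}_X g)(Y,Z) = \langle \nabla_Y X, Z\rangle + \langle Y, \nabla_Z X\rangle$; setting $Y = Z = \gamma'$ and using metric compatibility together with $\nabla_{\gamma'}\gamma' = 0$ gives the stated derivative form. Everything else is a direct assembly of Lemma \ref{Variation}, Cauchy--Schwarz, and the case split on whether $r$ exceeds $1$.
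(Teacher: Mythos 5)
Your proof is correct and follows essentially the same route as the paper: both apply Lemma \ref{Variation} to bound the Ricci integral, rewrite $(\mathcal{L}_X g)(\gamma',\gamma') = 2\frac{d}{ds}g(X,\gamma')$ along the geodesic so the Lie derivative term becomes a boundary term, and bound that by $2\|X_p\| + 2\|X_q\|$ via Cauchy--Schwarz. The only difference is cosmetic (you bound $\lambda r$ from above, the paper bounds the Ricci integral from below), so there is nothing to add.
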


\begin{proof}
Assume that $d(p, q) >1$ and let $\gamma$ be the minimal geodesic from $p$ to $q$.  Applying  Lemma \ref{Variation} we have 
\begin{equation} \label{Old}  \int_0^{r} \Ric (\gamma'(s), \gamma'(s)) ds \leq 2(n-1)+H_p + H_q. \end{equation}
On the other hand, by equation (\ref{MainThing})  
\begin{eqnarray}  \int_0^{r} \Ric (\gamma'(s), \gamma'(s)) ds  &\geq&  \int_0^{r}\left( \lambda g(\gamma'(s), \gamma'(s)) - \mathcal{L}_Xg(\gamma'(s), \gamma'(s)) \right) ds \nonumber \\ &\geq & \label{Deriv} \lambda d(p, q) + 2 g_p(X, \gamma'(0)) - 2 g_{q}(X, \gamma'(r)) \nonumber \\ & \geq &  \label{Done} \lambda d(p, q) -  2||X_p|| - 2||X_q||.  
\end{eqnarray}
Where, in the last step, we have used  \[ \displaystyle \mathcal{L}_Xg\left(\gamma'(s), \gamma'(s)\right) = 2 \frac{d}{ds} g(X, \gamma'(s)).\]  Combining  (\ref{Old}) and (\ref{Done}) and solving for $d(p,q)$ gives (\ref{IsomEq}).
\end{proof}

\begin{proof} [Proof of Theorem \ref{Theorem}]
Let $\widetilde{M}$ be the universal cover of $M$.  $\widetilde{M}$ satisfies (\ref{MainThing})  for the pullback metric and  pullback vector field, $\widetilde{X}$. Fix $\tilde{p}$  in $\widetilde{M}$ and let $h \in \pi_1(M)$ identified as a deck transformation on $\widetilde{M}$. Note that $B(\tilde{p},1)$ and $B(h(\tilde{p}), 1)$ are isometric, thus $H_p = H_{h(p)}$.   Also, $||\widetilde{X}_{\widetilde{p}}|| = || \widetilde{X}_{h(\widetilde{p})}|| $ so by applying Theorem \ref{Isom} to the points $\tilde{p}$ and $h(\tilde{p})$ we obtain 
\[ d(\widetilde{p}, h(\widetilde{p})) \leq \max \left \{ 1,  \frac{2}{\lambda} \left(n-1 + H_{\tilde{p}} + 2||\widetilde{X}_{\tilde{p}}|| \right) \right\}   \qquad \forall h \in \pi_1(M).\]
  Since the right hand side is independent of $h$,  this proves the theorem.

\end{proof}

\textbf{Acknowledgements:} I would like to thank the authors of \cite{FLGR} for providing me with a copy of their work,  Ben Chow for encouraging me to study Ricci solitons and for his interest in this work, and  Guofang Wei and Peter Petersen for many helpful discussions.  This work was partially completed while at MSRI.

\bibliographystyle{amsalpha}

\end{document}